\documentclass[letterpaper, 10 pt, conference]{ieeeconf}

\IEEEoverridecommandlockouts
\renewcommand{\baselinestretch}{0.982}

\usepackage{float}
\usepackage{paralist}
\usepackage{amsfonts}
\usepackage{graphics}
\usepackage{epsfig}
\usepackage{mathptmx}
\usepackage{times}
\usepackage{amsmath}
\usepackage{amssymb}
\usepackage{physics}
\usepackage{mdwmath}
\usepackage{mdwtab}
\usepackage{color}
\usepackage[hidelinks]{hyperref}
\usepackage{algpseudocode}
\usepackage{algorithm}
\usepackage{caption}
\usepackage{subcaption}
\usepackage{bm}
\usepackage{tikz}
\usetikzlibrary{shapes, arrows.meta, positioning}
\usepackage{booktabs}
\usepackage{tabularx}
\usepackage{multirow}
\usepackage{subcaption}

\newtheorem{theorem}{Theorem}

\newtheorem{proposition}{Proposition}

\newtheorem{remark}{Remark}
\newtheorem{problem}{Problem}
\newtheorem{corollary}{Corollary}
\newtheorem{assumption}{Assumption}

\begin{document}

\title{\LARGE \bf
Rank-Dependent Error Bounds and Near-Optimality in Quantum Control via Hierarchical Tucker Surrogates
}

\author{Nahid Binandeh Dehaghani, Rafal Wisniewski, Julian Berberich, A. Pedro Aguiar
 \thanks{N. Dehaghani and R. Wisniewski are with the Department of Electronic Systems, Aalborg University, Fredrik Bajers vej 7c, DK-9220 Aalborg, Denmark
        {\tt\small \{nahidbd,raf\}@es.aau.dk}}%
\thanks{J. Berberich is 
with the Institute for Systems Theory and Automatic Control and the Center for Integrated Quantum Science and Technology (IQST), University of Stuttgart, 70569 Stuttgart, Germany
 {\tt\small julian.berberich@ist.uni-stuttgart.de}
}        
 \thanks{A. Pedro Aguiar is with the Research Center for Systems and Technologies (SYSTEC), Electrical and Computer Engineering Department, FEUP - Faculty of Engineering, University of Porto, Rua Dr. Roberto Frias sn, i219, 4200-465 Porto, Portugal
         {\tt\small pedro.aguiar@fe.up.pt}}
\thanks{The authors acknowledge the support of the Danish e-Infrastructure Consortium (DeiC) and the National Quantum Algorithm Academy (NQAA) through the Postdoctoral Scholarship under the project ``Quantum-Driven Solutions for Multi-Agent Systems and Advanced Computation''. This work was also funded by QuantERA FeedbacQ, by
the Deutsche Forschungsgemeinschaft (DFG, German Research Foundation) – 579821331 and 583407438, and cofunded by the European Commission.
Additional support was provided by the Research Center for Systems and Technologies (SYSTEC, 10.54499/UID/00147/2025) and the Associate Laboratory Advanced Production and Intelligent Systems (ARISE, 10.54499/LA/P/0112/2020) funded by Fundação para a Ciência e a Tecnologia, I.P./ MCTES through national funds.}
}
\maketitle

\begin{abstract}
We develop a certified finite-horizon quantum optimal-control framework
based on fixed-rank Hierarchical Tucker (HT) surrogates. Under a uniform
rank-dependent HT truncation-accuracy condition, we establish exponentially
decaying trajectory, cost, and value-function errors and near-optimality of
surrogate-generated controls. We further derive a logarithmic
rank--performance relation, an a posteriori cost certificate from realized
truncation perturbations, and guarantees for inexact surrogate optimization
and control convergence under additional regularity conditions. Numerical
experiments on controlled XXZ spin chains provide finite-sample evidence
consistent with the truncation condition and illustrate the trade-off
between HT rank, representation size, surrogate accuracy, and control
performance.
\end{abstract}

\section{Introduction}

Quantum optimal control provides a 
systematic 
framework for steering
quantum systems toward desired states or operations under physical
constraints, with applications in quantum information processing,
quantum simulation, spectroscopy, and atomic and molecular control
\cite{glaser2015training,dalessandro2021introduction}. For interacting
many-body systems, however, the Hilbert-space dimension grows exponentially
with system size, making repeated state propagation and optimization
computationally demanding.
A variety of quantum-control methods have been developed to construct
high-fidelity protocols \cite{glaser2015training,rabitz2008quantum}.
Their application to large many-body systems remains limited by the cost of
representing and propagating the quantum state. This has motivated low-rank
tensor-network representations, which provide compact descriptions when
the corresponding ranks remain moderate. In particular, matrix-product-state
methods have enabled control studies beyond direct full-state simulation
\cite{jensen2021achieving,metz2023selfcorrecting}.

The Hierarchical Tucker (HT) format provides a flexible tree-structured
decomposition of high-dimensional tensors. Hierarchical singular value
decomposition (HSVD) enables systematic rank truncation with approximation
estimates based on discarded hierarchical singular values
\cite{grasedyck2010hierarchical,hackbusch2012tensor}. Rapidly decaying
hierarchical spectra can therefore permit accurate fixed-rank
approximations at moderate HT ranks.
In our previous work \cite{dehaghani2026certified}, fixed-rank HT
truncation was studied from a closed-loop robustness perspective, yielding
practical-stability and rank-dependent tracking guarantees. That analysis
did not address finite-horizon optimal control or quantify the performance
loss when a control optimized on an HT surrogate is applied to the
full-order system.

This distinction is important because accurate state approximation alone
does not guarantee accurate control design: a reduced model may approximate
the dynamics well while its optimized control remains suboptimal on the
full-order system. Thus, quantitative guarantees linking tensor rank,
dynamical approximation, and optimal-control performance are needed.
This letter establishes such a connection for finite-horizon quantum
optimal control using fixed-rank HT surrogates. Under a uniform
rank-dependent HT truncation-accuracy condition, we relate HT rank to
trajectory, objective, and value-function errors and to the full-order
performance of surrogate-generated controls.

The main contributions are:
(i) explicit rank-dependent bounds on trajectory, cost-functional, and
value-function approximation errors;
(ii) a near-optimality guarantee for surrogate-generated controls, showing
exponential decay of the full-order optimality gap with HT rank under the
assumed exponential truncation-accuracy condition;
(iii) a logarithmic rank--performance relation providing a sufficient rank
for a prescribed full-order performance tolerance, together with an
a posteriori certificate based on realized truncation perturbations; and
(iv) extensions to inexact surrogate optimization and, under local
quadratic growth and uniqueness of the full-order minimizer, convergence
of surrogate-optimal controls toward the full-order optimizer.

\section{Optimal Control Problem}



We consider the semi-discrete controlled Schr\"odinger model introduced in
\cite{dehaghani2026certified}, namely
\begin{equation}
i\hbar \dot{\Psi}(t)
=
\left(H_0^N+u(t)H_1^N\right)\Psi(t),
\label{eq:schrodinger}
\end{equation}
where $\Psi(t)\in\mathbb C^N$ denotes the quantum state,
$u(t)\in\mathbb R$ is a scalar control input, and
$H_0^N,H_1^N\in\mathbb C^{N\times N}$ are finite-dimensional
discretizations of the drift and control Hamiltonians, respectively.
The superscript $N$ indicates dependence on the discretization level.
Following \cite{dehaghani2026certified}, we assume that $H_0^N$ and
$H_1^N$ are Hermitian discretizations obtained from standard symmetric
schemes with compatible boundary conditions. Consequently, the resulting
Schr\"odinger evolution is unitary and preserves the state norm.
Throughout, $\|\cdot\|$ denotes the Euclidean norm on $\mathbb C^N$.
For numerical optimization, the control is assumed piecewise constant over
sampling intervals of length $\Delta t>0$. Denoting by $u_k$ the control
applied over $[k\Delta t,(k+1)\Delta t)$, the resulting discrete-time
dynamics are
\begin{equation}
\Psi_{k+1}
=
F_{\Delta t}(\Psi_k,u_k),
\quad
\Psi_0=x_0,
\quad
k=0,\ldots,N_T-1,
\label{eq:discrete_flow}
\end{equation}
where
$
F_{\Delta t}(\Psi,u)
=
\exp\!\left(
-\frac{i\Delta t}{\hbar}
(H_0^N+uH_1^N)
\right)\Psi .
\label{eq:flow_map}
$
Since $H_0^N+uH_1^N$ is Hermitian for every admissible $u$, the flow map
is unitary and therefore non-expansive:
$
\|F_{\Delta t}(x,u)-F_{\Delta t}(y,u)\|
=
\|x-y\|.
\label{eq:nonexpansive}
$
Let
$\mathcal U=\{u\in\mathbb R:|u|\leq u_{\max}\}$
denote the admissible control set. A control sequence over a horizon of
$N_T$ time steps is written as
$\mathbf u=(u_0,\ldots,u_{N_T-1})\in\mathcal U^{N_T}$.
For $x_0\in\mathbb C^N$ and
$\mathbf u\in\mathcal U^{N_T}$, the corresponding state trajectory
generated by~\eqref{eq:discrete_flow} is denoted by
$\{\Psi_k(x_0,\mathbf u)\}_{k=0}^{N_T}$.


For the trajectory generated by~\eqref{eq:discrete_flow}, we consider
finite-horizon optimal-control problems with cost functional
\begin{equation}
J(x_0,\mathbf u)
=
\sum_{k=0}^{N_T-1}
L\!\left(\Psi_k(x_0,\mathbf u),u_k\right)
+
\Phi\!\left(\Psi_{N_T}(x_0,\mathbf u)\right),
\label{eq:cost_functional}
\end{equation}
where $L:\mathbb C^N\times\mathcal U\rightarrow\mathbb R$ is a running
cost and $\Phi:\mathbb C^N\rightarrow\mathbb R$ is a terminal cost.
The running cost may penalize state deviations and/or control effort,
while the terminal cost encodes the terminal control objective. The
formulation~\eqref{eq:cost_functional} encompasses a broad class of
quantum optimal-control problems \cite{dehaghani2023quantum}, including
state transfer, trajectory tracking, and minimum-energy control. A typical
state-transfer objective is
$\Phi(\Psi)=1-|\langle\Psi,\Psi_{\rm tar}\rangle|^2$, where
$\Psi_{\rm tar}\in\mathbb C^N$ is a normalized target state. Control effort
is often penalized through a quadratic running cost of the form
$L(\Psi,u)=\rho|u|^2$, with $\rho>0$. The subsequent analysis is not
restricted to these choices and applies to general cost functions
satisfying the following regularity condition.

\begin{assumption}[Lipschitz Cost Regularity] \! \! \! \! \!
There exist constants $L_\Phi>0$ and $L_L\geq0$ such that
$
|\Phi(x)-\Phi(y)|
\leq
L_\Phi\|x-y\|$, $|L(x,u)-L(y,u)|
\leq
L_L\|x-y\|,
$
for all admissible states $x,y$ and controls $u$. The running-cost
condition is imposed only with respect to the state variable and is
required to hold uniformly over admissible controls; no Lipschitz
regularity with respect to the control input is needed.
\label{ass:cost_regularity}
\end{assumption}

The fidelity-based terminal cost introduced above satisfies
Assumption~\ref{ass:cost_regularity} on any bounded state set. If $\|x\|,\|y\|\leq M$ and $\|\Psi_{\rm tar}\|=1$, then
$
|\Phi(x)-\Phi(y)|
\leq
2M\|x-y\|.
$
In particular, for normalized quantum states $(M=1)$, one may take
$L_\Phi=2$.





The full-order finite-horizon optimal control problem is
$
\inf_{\mathbf u\in\mathcal U^{N_T}}
J(x_0,\mathbf u)
$
subject to the discrete-time Schr\"odinger dynamics
\eqref{eq:discrete_flow}. The associated value function is defined by
$
V(x_0)
=
\inf_{\mathbf u\in\mathcal U^{N_T}}
J(x_0,\mathbf u).
$
Whenever a minimizer exists, we denote any such minimizer by
$
\mathbf u^\star
\in
\arg\min_{\mathbf u\in\mathcal U^{N_T}}
J(x_0,\mathbf u),
$
and denote the corresponding optimal trajectory by
$\Psi_k^\star=\Psi_k(x_0,\mathbf u^\star)$,
$k=0,\ldots,N_T$.

\begin{problem}
Construct a fixed-rank Hierarchical Tucker surrogate model and
establish explicit rank-dependent bounds on the value-function
error and on the loss of optimality incurred by controls optimized
on the HT surrogate when evaluated on the full-order dynamics.
\end{problem}

\section{Hierarchical Tucker Surrogate Optimal Control}

This section introduces the fixed-rank Hierarchical Tucker surrogate model
used to approximate the full-order Schr\"odinger dynamics and formulate the
corresponding surrogate optimal control problem.

\subsection{Hierarchical Tucker Representation and Fixed-Rank Truncation}
%
%
We identify the state $\Psi\in\mathbb C^N$ with an order-$n$
tensor through the tensor-product factorization
$\mathbb C^N\cong\bigotimes_{i=1}^n\mathbb C^{d_i}$,
$\prod_{i=1}^n d_i=N$.
A dimension tree $\mathcal T$ hierarchically partitions the mode set
$\{1,\ldots,n\}$ and thereby specifies the bipartitions used in the
HT representation; see \cite[Def.~3.1]{grasedyck2010hierarchical}.
We fix $\mathcal T$ throughout the analysis and denote by
$\mathcal T^\circ$ its set of non-root nodes.
For each $t\in\mathcal{T}^{\circ}$, let
$\bar t=\{1,\ldots,n\}\setminus t$ and denote by
$X^{(t)}(\Psi)$ the $t$-matricization of $\Psi$, whose row indices
correspond to the modes in $t$ and whose column indices correspond to
the complementary modes in $\bar t$; see
\cite[Def.~3.3]{grasedyck2010hierarchical}.
Let
$\{\sigma_\alpha^{(t)}(\Psi)\}_{\alpha\geq1}$ denote the singular
values of $X^{(t)}(\Psi)$. The hierarchical Tucker rank at node $t$ is
$
r_t(\Psi)=\operatorname{rank}\!\left(X^{(t)}(\Psi)\right).
$
Given a uniform hierarchical rank budget $r\in\mathbb N$, we restrict
the hierarchical ranks according to
$
r_t\leq r
$
for all $t\in\mathcal T^\circ$.
The corresponding fixed-rank HT class is
$
\mathcal H_r
=
\left\{
\Psi:
\operatorname{rank}\!\left(X^{(t)}(\Psi)\right)\leq r,
\quad
\forall\,t\in\mathcal T^\circ
\right\}.
$
Let $\Pi_r:\mathbb C^N\rightarrow\mathcal H_r$ denote the HSVD
truncation map, obtained by truncating the hierarchical singular-value
decompositions associated with the prescribed dimension tree to rank
at most $r$ at each node $t\in\mathcal T^\circ$.

\begin{assumption}[Exponential Hierarchical Spectral Decay]
\label{ass:decay}
Let $\Psi\in\mathbb{C}^N$ be a state tensor. We say that $\Psi$
satisfies exponential hierarchical spectral decay if there exist
constants $C_1,c>0$ such that
$
\sigma_\alpha^{(t)}(\Psi)\leq C_1e^{-c\alpha},
$
for every $t\in\mathcal T^\circ$ and all $\alpha\geq1$.
\end{assumption}


Assumption~\ref{ass:decay} admits a natural interpretation in terms of
bipartite entanglement. For a normalized pure state, the hierarchical
singular values coincide with the Schmidt coefficients across the
corresponding tree-induced bipartitions
\cite{schollwock2011density,eisert2010colloquium}. Thus,
Assumption~\ref{ass:decay} requires exponentially decaying Schmidt spectra,
with uniformly bounded Schmidt rank as a sufficient special case. Bounded
entanglement entropy alone, however, does not generally imply exponential
Schmidt-spectrum decay.

The following approximation result is recalled from
\cite{dehaghani2026certified}.

\begin{proposition}\label{prop:approx}
Let $\Psi$ satisfy Assumption~\ref{ass:decay}. Then there exist constants
$C_2,c'>0$, independent of the prescribed rank $r$ and determined by
the spectral-decay constants and the fixed dimension tree, such that
$
\left\|\Psi-\Pi_r(\Psi)\right\|
\leq C_2 e^{-c'r}.
$
Here, $C_2$ is a rank-independent approximation constant and $c'$
denotes the effective exponential decay rate inherited from the
hierarchical spectral decay in Assumption~\ref{ass:decay}.
Thus, under Assumption~\ref{ass:decay}, increasing the prescribed HT
rank $r$ yields exponentially improving approximation accuracy, at the
expense of increasing the number of degrees of freedom in the HT
representation.
\end{proposition}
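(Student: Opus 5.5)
The plan is to combine the standard quasi-optimality bound for HSVD truncation with a geometric tail sum for the discarded singular values. The HSVD error estimate of Grasedyck \cite[Thm.~3.11]{grasedyck2010hierarchical} gives, for any $\Psi\in\mathbb C^N$ and the fixed dimension tree $\mathcal T$,
\begin{equation*}
\left\|\Psi-\Pi_r(\Psi)\right\|
\leq
\Big(\sum_{t\in\mathcal T^\circ}\ \sum_{\alpha>r}\big(\sigma_\alpha^{(t)}(\Psi)\big)^2\Big)^{1/2}.
\end{equation*}
This follows because the HSVD truncation is a composition of orthogonal projections, one per node. Their errors are mutually orthogonal up to the nested structure, so the total squared error is bounded by the sum of the discarded squared singular values over all non-root nodes. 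I would take this estimate as the starting point.

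Next I would insert Assumption~\ref{ass:decay} node by node. For each $t\in\mathcal T^\circ$,
\begin{equation*}
\sum_{\alpha>r}\big(\sigma_\alpha^{(t)}\big)^2
\leq C_1^2\sum_{\alpha\geq r+1}e^{-2c\alpha}
= C_1^2\,\frac{e^{-2c(r+1)}}{1-e^{-2c}}
\leq \frac{C_1^2 e^{-2c}}{1-e^{-2c}}\,e^{-2cr}.
\end{equation*}
Summing over the $|\mathcal T^\circ|=2n-2$ non-root nodes of a binary dimension tree and taking the square root gives
\begin{equation*}
\left\|\Psi-\Pi_r(\Psi)\right\|\leq C_2e^{-c'r},\qquad
C_2=C_1\Big(\frac{(2n-2)\,e^{-2c}}{1-e^{-2c}}\Big)^{1/2},\quad c'=c.
\end{equation*}
Here $C_2$ depends only on $C_1$, $c$ and the tree, and is independent of $r$, which is the statement.

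The main obstacle is the first step. The assumption bounds the singular values of the matricizations of $\Psi$ itself, whereas the sequential (leaves-to-root) HSVD variant truncates intermediate tensors whose singular values differ from those of $\Psi$. I would avoid this by using the root-to-leaves variant, in which all projections are computed from $\Psi$ directly, so the orthogonality argument applies with the singular values of $\Psi$.

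If only the leaves-to-root variant is available, I would instead use that orthogonal projections cannot increase singular values of matricizations, i.e.\ $\sigma_\alpha^{(t)}(P\Psi)\leq\sigma_\alpha^{(t)}(\Psi)$. The same tail bound then holds at every stage, possibly with a modified constant, and the conclusion with some $c'>0$ is unaffected. Finally, for $r$ exceeding the maximal possible node rank the error is zero, so the bound holds trivially there.
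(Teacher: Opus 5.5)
Your proof is correct and is essentially the paper's route: the paper only recalls this result from \cite{dehaghani2026certified}, but it uses exactly your two ingredients, namely the HSVD estimate $\|\Psi-\Pi_r\Psi\|^2\le\sum_{t\in\mathcal T^\circ}\sum_{\alpha>r}(\sigma_\alpha^{(t)}(\Psi))^2$ and the geometric tail sum $C_1^2e^{-2c(r+1)}/(1-e^{-2c})$, in the proof of Proposition~\ref{prop:tail_decay} and the remark after it, and these combine to give your $c'=c$ and $C_2=C_1\sqrt{|\mathcal T^\circ|e^{-2c}/(1-e^{-2c})}$. Your leaves-to-root fallback is also sound, with one caveat: the inequality $\sigma_\alpha^{(t)}(P\Psi)\le\sigma_\alpha^{(t)}(\Psi)$ holds because the previously applied projections act on nodes contained in $t$ or in $\bar t$, so they act on $X^{(t)}$ by left/right contractions; it is not true for arbitrary orthogonal projections on $\mathbb C^N$, and the level-wise errors then combine only up to a depth-dependent constant.
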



In particular, if $x_0$ satisfies Assumption~\ref{ass:decay},
Proposition~\ref{prop:approx} gives
$
\|x_0-\Pi_r(x_0)\|\leq C_2e^{-c'r},
$
which bounds the initial trajectory approximation error. Combined with
the uniform truncation-accuracy condition introduced below, this yields
rank-dependent trajectory and optimal-control performance bounds.

\subsection{HT-Surrogate Optimal Control Problem}

We construct a fixed-rank HT surrogate of the full-order
Schr\"odinger dynamics by applying HSVD truncation after each
time step. Given a control sequence
$\mathbf u=(u_0,\ldots,u_{N_T-1})\in\mathcal{U}^{N_T}$, the
HT-surrogate dynamics for $k=0,\ldots,N_T-1$ are defined by
\begin{equation}
\Psi_{k+1}^{(r)}
=
\Pi_r
F_{\Delta t}\!\left(\Psi_k^{(r)},u_k\right),
\qquad
\Psi_0^{(r)}=\Pi_r(x_0).
\label{eq:surrogate_dynamics}
\end{equation}
The corresponding surrogate trajectory is denoted by
$\{\Psi_k^{(r)}(x_0,\mathbf u)\}_{k=0}^{N_T}$.
Analogously to~\eqref{eq:cost_functional}, we define the HT-surrogate
cost functional by
$
J_r(x_0,\mathbf u)
\!
=
\!
\sum_{k=0}^{N_T-1}
L\!\left(\Psi_k^{(r)}(x_0,\mathbf u),u_k\right)
\!\!\!\!+\!\!\!\!
\Phi\!\left(\Psi_{N_T}^{(r)}(x_0,\mathbf u)\right).
$
The corresponding surrogate value function is
$
V_r(x_0)
=
\inf_{\mathbf u\in\mathcal{U}^{N_T}}
J_r(x_0,\mathbf u).
$
Whenever a minimizer exists, let $\mathbf u_r^\star$ denote any
surrogate minimizer, i.e.,
$
\mathbf u_r^\star
\in
\arg\min_{\mathbf u\in\mathcal{U}^{N_T}}
J_r(x_0,\mathbf u),
$
with associated surrogate optimal trajectory
$
\Psi_k^{(r)\star}
=
\Psi_k^{(r)}(x_0,\mathbf u_r^\star)
$,
$k=0,\ldots,N_T$.
The objective of the subsequent analysis is to establish rank-dependent
bounds on the value-function error
$
|V_r(x_0)-V(x_0)|
$
and on the full-order optimality gap of the surrogate-optimal control,
$
J(x_0,\mathbf u_r^\star)-V(x_0).
$

\subsection{Trajectory Approximation Under Fixed-Rank Truncation}

We compare the full-order and HT-surrogate trajectories under the same
admissible control sequence. The analysis requires uniform rank-dependent
truncation accuracy along the surrogate evolution.

\begin{assumption}[Uniform HT Truncation Accuracy]
\label{ass:compressibility}
There exist constants $C_{\rm tr},c_{\rm tr}>0$, independent of the
admissible control sequence $\mathbf u$, the time index $k$, and the
rank $r$, such that the pre-truncation surrogate states
$
Z_k(\mathbf u)
=
F_{\Delta t}\!\left(\Psi_k^{(r)}(\mathbf u),u_k\right)
$
satisfy
$
\|Z_k(\mathbf u)-\Pi_r(Z_k(\mathbf u))\|
\leq
C_{\rm tr}e^{-c_{\rm tr}r}
$
for every $\mathbf u\in\mathcal U^{N_T}$,
$k=0,\ldots,N_T-1$, and $r\geq1$.
\end{assumption}

\begin{proposition}[Tail-decay condition implying
Assumption~\ref{ass:compressibility}]
\label{prop:tail_decay}
Suppose there exist constants $C_{\rm tail},c_{\rm tail}>0$,
independent of $\mathbf u$, $k$, and $r$, such that
$\sum_{\alpha>r}(\sigma_\alpha^{(t)}(Z_k(\mathbf u)))^2
\leq C_{\rm tail}e^{-2c_{\rm tail}r}$
for every $\mathbf u\in\mathcal U^{N_T}$,
$k=0,\ldots,N_T-1$, $t\in\mathcal T^\circ$, and $r\geq1$.
Then Assumption~\ref{ass:compressibility} holds. In particular,
$\|Z_k(\mathbf u)-\Pi_rZ_k(\mathbf u)\|
\leq C_{\rm tr}e^{-c_{\rm tr}r}$,
where one may take $c_{\rm tr}=c_{\rm tail}$ and
$C_{\rm tr}=\sqrt{|\mathcal T^\circ|C_{\rm tail}}$.
\end{proposition}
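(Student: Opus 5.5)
The plan is to invoke the standard quasi-optimal error bound for the hierarchical SVD (HSVD) truncation of Grasedyck \cite{grasedyck2010hierarchical}. For any tensor $Z\in\mathbb C^N$ and a fixed dimension tree $\mathcal T$, the root-to-leaves HSVD truncation $\Pi_r$ satisfies
\begin{equation*}
\|Z-\Pi_r(Z)\|^2\leq\sum_{t\in\mathcal T^\circ}\sum_{\alpha>r}\bigl(\sigma_\alpha^{(t)}(Z)\bigr)^2 .
\end{equation*}
Here $\sigma_\alpha^{(t)}(Z)$ are the singular values of the matricization $X^{(t)}(Z)$ of the \emph{original} tensor. I will recall why this holds, so that the constant is transparent.

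\textbf{Why the bound holds.} $\Pi_r$ is a composition of orthogonal projections $P_t$, one per node $t\in\mathcal T^\circ$. Each $P_t$ projects the $t$-matricization onto its leading $r$ left singular vectors. These projections act on different index groups and can be arranged so that $\Pi_r Z=\prod_t P_t\,Z$.

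Use the telescoping/Pythagorean argument: $Z-\prod_t P_t Z$ decomposes into a sum of terms $(I-P_t)$ applied to partially projected tensors. These terms are mutually orthogonal, and each has norm at most $\|(I-P_t)Z\|$, because the remaining projections are orthogonal and non-expansive. Moreover, $\|(I-P_t)Z\|^2=\sum_{\alpha>r}(\sigma^{(t)}_\alpha(Z))^2$ by the Eckart--Young theorem applied to $X^{(t)}(Z)$.

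This is the step I expect to be the main obstacle: showing that the error contributions are orthogonal, and that the singular values used at deeper nodes can be bounded by those of the original tensor. Rather than re-deriving it, I would cite \cite[Thm.~3.11]{grasedyck2010hierarchical} (see also \cite{hackbusch2012tensor}).

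\textbf{Application.} Apply the bound to $Z=Z_k(\mathbf u)$ and insert the hypothesized tail bound at each of the $|\mathcal T^\circ|$ nodes:
\begin{equation*}
\|Z_k(\mathbf u)-\Pi_r Z_k(\mathbf u)\|^2\leq |\mathcal T^\circ|\,C_{\rm tail}\,e^{-2c_{\rm tail}r}.
\end{equation*}
Taking square roots gives $\|Z_k(\mathbf u)-\Pi_rZ_k(\mathbf u)\|\leq\sqrt{|\mathcal T^\circ|C_{\rm tail}}\,e^{-c_{\rm tail}r}$.

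\textbf{Uniformity.} The constants $C_{\rm tail}$, $c_{\rm tail}$ and $|\mathcal T^\circ|$ (the tree is fixed) do not depend on $\mathbf u$, $k$, or $r$. Hence the constants $C_{\rm tr}=\sqrt{|\mathcal T^\circ|C_{\rm tail}}$ and $c_{\rm tr}=c_{\rm tail}$ are independent of these quantities, which is exactly Assumption~\ref{ass:compressibility}.
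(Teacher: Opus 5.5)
Your proposal is correct and follows essentially the same route as the paper. You invoke Grasedyck's HSVD estimate $\|Z-\Pi_rZ\|^2\le\sum_{t\in\mathcal T^\circ}\sum_{\alpha>r}(\sigma_\alpha^{(t)}(Z))^2$, insert the uniform tail bound at each of the $|\mathcal T^\circ|$ nodes, take square roots, and note that all constants are independent of $\mathbf u$, $k$ and $r$. One caution concerns only your optional sketch of the HSVD estimate, which you ultimately replace by a citation just as the paper does, so it does not affect the proof. For nested, non-commuting node projections the telescoping terms $P_{t_m}\cdots P_{t_{j+1}}(I-P_{t_j})Z$ need not be mutually orthogonal. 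The bound instead follows by induction from $\|Z-PW\|^2=\|(I-P)Z\|^2+\|P(Z-W)\|^2\le\|(I-P)Z\|^2+\|Z-W\|^2$ for an orthogonal projection $P$. Also, in root-to-leaves HSVD every $P_t$ is built from the original tensor, so no comparison of singular values at deeper nodes is needed.
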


\begin{proof}
By the HSVD truncation estimate~\cite{grasedyck2010hierarchical},
$\|Z_k-\Pi_rZ_k\|^2
\leq\sum_{t\in\mathcal T^\circ}\sum_{\alpha>r}
(\sigma_\alpha^{(t)}(Z_k))^2$.
Using the assumed uniform tail bound gives
$\|Z_k-\Pi_rZ_k\|^2
\leq|\mathcal T^\circ|C_{\rm tail}e^{-2c_{\rm tail}r}$.
Taking square roots proves the claim.
\end{proof}

\begin{remark}
A sufficient condition for the tail-decay hypothesis of
Proposition~\ref{prop:tail_decay} is uniform exponential
hierarchical spectral decay,
$\sigma_\alpha^{(t)}(Z_k(\mathbf u))
\leq C_{\rm sp}e^{-c_{\rm sp}\alpha}$.
Indeed,
$\sum_{\alpha>r}(\sigma_\alpha^{(t)}(Z_k(\mathbf u)))^2
\leq
C_{\rm sp}^2e^{-2c_{\rm sp}(r+1)}/(1-e^{-2c_{\rm sp}})$.
Thus, uniform spectral decay implies Assumption~\ref{ass:compressibility}
through the HSVD truncation estimate. Such decay is associated with
low-rank tensor-network compressibility, although locality or limited
entanglement growth alone does not guarantee the required uniform
bound.
\end{remark}


For notational simplicity, we suppress the dependence on $\mathbf u$ and
define the truncation perturbation
\begin{equation}
e_k^{(r)}
:=
Z_k-\Pi_r(Z_k),
\qquad
Z_k=F_{\Delta t}(\Psi_k^{(r)},u_k).
\label{eq:truncation_perturbation}
\end{equation}
Then the surrogate dynamics can be written as
\begin{equation}
\Psi_{k+1}^{(r)}
=
F_{\Delta t}(\Psi_k^{(r)},u_k)-e_k^{(r)}.
\label{eq:perturbed_surrogate}
\end{equation}
By Assumption~\ref{ass:compressibility},
$
\|e_k^{(r)}\|
\leq C_{\rm tr}e^{-c_{\rm tr}r},
$
$k=0,\ldots,N_T-1$.

\begin{theorem}[Trajectory Approximation]
\label{thrm:traj-approx} \!\!\!\!
Suppose Assumption~\ref{ass:compressibility} holds and the initial
condition $x_0$ satisfies Assumption~\ref{ass:decay}. Then
$
\max_{0\leq k\leq N_T}
\|\Psi_k-\Psi_k^{(r)}\|
\leq C_T e^{-\bar c r},
\label{eq:trajectory_bound}
$
where
$
C_T:=C_2+N_TC_{\rm tr}$,
$\bar c:=\min\{c',c_{\rm tr}\}$.
In particular, $C_T$ depends on the finite horizon $N_T$ but is
independent of the rank $r$.
\end{theorem}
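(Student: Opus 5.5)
The plan is a standard error-recursion argument that uses the non-expansiveness of the unitary flow map. For a fixed admissible control sequence $\mathbf u$, define the error $\delta_k:=\Psi_k-\Psi_k^{(r)}$, where both trajectories are driven by the same $\mathbf u$. At $k=0$, $\delta_0=x_0-\Pi_r(x_0)$. Since $x_0$ satisfies Assumption~\ref{ass:decay}, Proposition~\ref{prop:approx} gives $\|\delta_0\|\le C_2e^{-c'r}$.

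For the step $k\to k+1$, use the perturbed form~\eqref{eq:perturbed_surrogate} of the surrogate dynamics:
\begin{equation*}
\delta_{k+1}=F_{\Delta t}(\Psi_k,u_k)-F_{\Delta t}(\Psi_k^{(r)},u_k)+e_k^{(r)}.
\end{equation*}
The flow map is unitary, hence an isometry, for every admissible $u_k$. The triangle inequality then gives
\begin{equation*}
\|\delta_{k+1}\|\le\|\delta_k\|+\|e_k^{(r)}\|\le\|\delta_k\|+C_{\rm tr}e^{-c_{\rm tr}r},
\end{equation*}
where the last inequality is Assumption~\ref{ass:compressibility}. That assumption holds uniformly in $k$ and $\mathbf u$, so the constant does not depend on the control. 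Unrolling the recursion yields $\|\delta_k\|\le C_2e^{-c'r}+kC_{\rm tr}e^{-c_{\rm tr}r}$ for $0\le k\le N_T$.

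To finish, bound $k\le N_T$ and use $e^{-c'r},e^{-c_{\rm tr}r}\le e^{-\bar c r}$ with $\bar c=\min\{c',c_{\rm tr}\}$. This gives $\max_k\|\delta_k\|\le (C_2+N_TC_{\rm tr})e^{-\bar c r}=C_Te^{-\bar cr}$. The constant $C_T$ depends on $r$ only through this exponential factor.

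No step is a genuine obstacle. The one point needing care is that the Lipschitz (isometry) property must be applied to two different states under the same control $u_k$, which is exactly how the two trajectories are coupled. One should also note that the truncation error is measured at the pre-truncation surrogate state $Z_k$, so Assumption~\ref{ass:compressibility} applies directly to $e_k^{(r)}$ without any decay assumption on the full-order trajectory. The bound is uniform over $\mathbf u\in\mathcal U^{N_T}$, which later results will rely on.
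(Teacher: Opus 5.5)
Your proposal is correct and follows the paper's proof essentially step for step. It uses the same additive recursion $\|\delta_{k+1}\|\le\|\delta_k\|+\|e_k^{(r)}\|$ from unitarity of $F_{\Delta t}$ under a common control, Proposition~\ref{prop:approx} for $\delta_0$, Assumption~\ref{ass:compressibility} for the per-step perturbations, and the final bound via $k\le N_T$ and $\bar c=\min\{c',c_{\rm tr}\}$. One phrasing point: $C_T$ itself does not depend on $r$ at all; it is the bound that depends on $r$ only through $e^{-\bar c r}$.
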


\begin{proof}
Define $\delta_k:=\|\Psi_k-\Psi_k^{(r)}\|$. Using
\eqref{eq:discrete_flow} and \eqref{eq:perturbed_surrogate},
$
\delta_{k+1}
\leq
\|F_{\Delta t}(\Psi_k,u_k)
-F_{\Delta t}(\Psi_k^{(r)},u_k)\|
+\|e_k^{(r)}\|.
$
By the non-expansiveness property~\eqref{eq:nonexpansive},
$
\delta_{k+1}\leq\delta_k+\|e_k^{(r)}\|,
$
and hence
$
\delta_k
\leq
\delta_0+\sum_{j=0}^{k-1}\|e_j^{(r)}\|.
$
Proposition~\ref{prop:approx} gives
$
\delta_0
=
\|x_0-\Pi_r(x_0)\|
\leq C_2e^{-c'r},
$
while Assumption~\ref{ass:compressibility} gives
$
\|e_j^{(r)}\|
\leq C_{\rm tr}e^{-c_{\rm tr}r}.
$
Therefore,
$
\delta_k
\leq
C_2e^{-c'r}
+kC_{\rm tr}e^{-c_{\rm tr}r}
\leq
(C_2+N_TC_{\rm tr})e^{-\bar c r},
$
where $\bar c=\min\{c',c_{\rm tr}\}$. Taking the maximum over
$k=0,\ldots,N_T$ proves the claim.
\end{proof}


\begin{remark}
Theorem~\ref{thrm:traj-approx} follows a perturbation argument for a
non-expansive system. Unitarity of the Schr\"odinger propagator prevents
amplification of the HT truncation perturbations, which therefore
accumulate additively over the finite horizon. For general dynamics
with Lipschitz constant $L_F$, the recursion becomes
$
\delta_{k+1}\leq L_F\delta_k+\|e_k^{(r)}\|,
$
leading instead to accumulation weighted by powers of $L_F$.
\end{remark}


\subsection{Cost, Value-Function, and Near-Optimality Bounds}

We now quantify the error induced by HT truncation on the cost
functional. Theorem~\ref{thrm:traj-approx}, together with the cost
regularity assumption, yields a uniform bound on the discrepancy
between the full-order and surrogate objectives.

\begin{theorem}[Uniform Cost Approximation]
\label{thm:uniformcostapprox} \!\!\!\!
Under Assum-\\
ptions~\ref{ass:cost_regularity} and
\ref{ass:compressibility}, and assuming that $x_0$ satisfies
Assumption~\ref{ass:decay}, there exists a constant $C_J>0$,
independent of the rank $r$, such that
$
|J(x_0,\mathbf u)-J_r(x_0,\mathbf u)|
\leq C_J e^{-\bar c r}
$
for every $\mathbf u\in\mathcal U^{N_T}$, where
$
C_J=C_T(L_\Phi+N_TL_L)
$
and $\bar c=\min\{c',c_{\rm tr}\}$.
\end{theorem}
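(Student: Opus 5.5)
The plan is to fix an arbitrary admissible control sequence $\mathbf u\in\mathcal U^{N_T}$ and compare $J(x_0,\mathbf u)$ and $J_r(x_0,\mathbf u)$ term by term. Both functionals share the same structure: a sum of $N_T$ running costs evaluated at $(\Psi_k,u_k)$, respectively $(\Psi_k^{(r)},u_k)$, plus a terminal cost. The key point is that the control argument $u_k$ is identical in both, so only state-Lipschitz regularity is needed. The triangle inequality gives
\[
|J(x_0,\mathbf u)-J_r(x_0,\mathbf u)|
\leq
\sum_{k=0}^{N_T-1}\bigl|L(\Psi_k,u_k)-L(\Psi_k^{(r)},u_k)\bigr|
+\bigl|\Phi(\Psi_{N_T})-\Phi(\Psi_{N_T}^{(r)})\bigr|.
\]

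Next I will apply Assumption~\ref{ass:cost_regularity} to each term. Each running-cost difference is at most $L_L\|\Psi_k-\Psi_k^{(r)}\|$, uniformly in $u_k\in\mathcal U$, and the terminal difference is at most $L_\Phi\|\Psi_{N_T}-\Psi_{N_T}^{(r)}\|$. I will then invoke Theorem~\ref{thrm:traj-approx}, whose hypotheses (Assumption~\ref{ass:compressibility} and Assumption~\ref{ass:decay} for $x_0$) are exactly those of the present statement. It bounds every one of the $N_T+1$ state errors by $C_Te^{-\bar c r}$. Summing gives $(N_TL_L+L_\Phi)C_Te^{-\bar c r}=C_Je^{-\bar c r}$.

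The only delicate point is that the Lipschitz bounds must apply at both the full-order and the surrogate states. The assumption is stated for ``admissible states,'' and for the fidelity cost it is only valid on a bounded set. The full-order states have unit norm by unitarity. The surrogate states satisfy $\|\Psi_k^{(r)}\|\leq\|Z_{k-1}\|$, since HSVD truncation is a composition of orthogonal projections and hence non-expansive; by induction, $\|\Psi_k^{(r)}\|\leq\|x_0\|$. All states therefore lie in a fixed ball, which I will take as the admissible set. This justifies the use of Assumption~\ref{ass:cost_regularity}.

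Finally, I will note that none of the constants involved depends on $\mathbf u$ or on $r$, because $C_T$ does not. The bound therefore holds uniformly over $\mathcal U^{N_T}$.
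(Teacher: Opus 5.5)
Your proposal is correct and follows the paper's proof essentially verbatim: you apply the triangle inequality term by term, use Assumption~\ref{ass:cost_regularity} in the state variable with the control held fixed, and invoke Theorem~\ref{thrm:traj-approx} to bound each of the $N_T+1$ state errors by $C_Te^{-\bar c r}$, which yields $C_J=C_T(L_\Phi+N_TL_L)$. Your extra check that all surrogate states stay in the ball of radius $\|x_0\|$ is a valid refinement that the paper leaves implicit; it holds because HSVD truncation applies a composition of orthogonal projections and so cannot increase the norm, although as a nonlinear map it is not non-expansive in the Lipschitz sense.
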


\begin{proof}
By the definitions of $J$ and $J_r$, the triangle inequality and
Assumption~\ref{ass:cost_regularity} give
$
|J(x_0,\mathbf u)-J_r(x_0,\mathbf u)|
\leq
L_\Phi\|\Psi_{N_T}-\Psi_{N_T}^{(r)}\|
+
L_L\sum_{k=0}^{N_T-1}
\|\Psi_k-\Psi_k^{(r)}\|.
$
By Theorem~\ref{thrm:traj-approx},
$
\|\Psi_k-\Psi_k^{(r)}\|\leq C_Te^{-\bar c r}
$
for $k=0,\ldots,N_T$. Hence
$
|J(x_0,\mathbf u)-J_r(x_0,\mathbf u)|
\leq
C_T(L_\Phi+N_TL_L)e^{-\bar c r}
=
C_Je^{-\bar c r}.
$
\end{proof}


\begin{proposition}[A Posteriori Cost Certification]
\label{APosteriori}
Suppose Assumption~\ref{ass:cost_regularity} holds.
For any admissible control sequence
$\mathbf u\in\mathcal{U}^{N_T}$, let $e_j^{(r)}$ denote the realized
HT truncation perturbations defined in
\eqref{eq:truncation_perturbation}, and let
$
\delta_0=\|x_0-\Pi_r(x_0)\|.
$
Then
$
|J(x_0,\mathbf u)-J_r(x_0,\mathbf u)|
\leq
L_\Phi
\left(
\delta_0+\sum_{j=0}^{N_T-1}\|e_j^{(r)}\|
\right)
+
L_L
\sum_{k=0}^{N_T-1}
\left(
\delta_0+\sum_{j=0}^{k-1}\|e_j^{(r)}\|
\right),
$
where an empty sum is understood to be zero.
\end{proposition}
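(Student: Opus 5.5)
The plan is to rerun the recursion from the proof of Theorem~\ref{thrm:traj-approx}, but to stop before invoking Proposition~\ref{prop:approx} or Assumption~\ref{ass:compressibility}. We keep the realized quantities $\delta_0$ and $\|e_j^{(r)}\|$ in the bound instead of replacing them with exponential estimates. This is why neither Assumption~\ref{ass:decay} nor Assumption~\ref{ass:compressibility} is needed here, only Assumption~\ref{ass:cost_regularity} together with the unitarity of $F_{\Delta t}$.

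\textbf{Step 1 (pathwise trajectory bound).} Fix $\mathbf u\in\mathcal U^{N_T}$ and set $\delta_k:=\|\Psi_k-\Psi_k^{(r)}\|$ along this control. Subtract \eqref{eq:perturbed_surrogate} from \eqref{eq:discrete_flow}, then apply the triangle inequality and the non-expansiveness of $F_{\Delta t}$. This gives $\delta_{k+1}\le\delta_k+\|e_k^{(r)}\|$. Inducting on $k$, with the initialization $\delta_0=\|x_0-\Pi_r(x_0)\|$ fixed by $\Psi_0^{(r)}=\Pi_r(x_0)$, yields
\[
\delta_k\le\delta_0+\sum_{j=0}^{k-1}\|e_j^{(r)}\|,\qquad k=0,\ldots,N_T,
\]
where the empty sum at $k=0$ is zero.

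\textbf{Step 2 (transfer to the cost).} Write $J-J_r$ as the sum of the running-cost differences $L(\Psi_k,u_k)-L(\Psi_k^{(r)},u_k)$ plus the terminal difference. The triangle inequality and Assumption~\ref{ass:cost_regularity} then give $|J-J_r|\le L_\Phi\delta_{N_T}+L_L\sum_{k=0}^{N_T-1}\delta_k$. The state-only, uniform-in-$u$ Lipschitz condition is exactly what is needed here, because both trajectories use the same $u_k$. Substituting the bound from Step 1 at $k=N_T$ in the terminal term and at each $k\le N_T-1$ in the running sum gives the claimed inequality.

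The main subtlety is applying Assumption~\ref{ass:cost_regularity} legitimately: it holds on the admissible (bounded) state set. The full-order states have unit norm. The surrogate states satisfy $\|\Pi_r Z\|\le\|Z\|$, since HSVD truncation is a composition of orthogonal projections, so they remain norm-bounded by $\|x_0\|$. Hence both trajectories lie in a bounded set where the Lipschitz constants apply, and the rest is routine bookkeeping.
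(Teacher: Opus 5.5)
Your proof is correct and follows the paper's argument: it unrolls the one-step recursion $\delta_{k+1}\le\delta_k+\|e_k^{(r)}\|$ pathwise without invoking the exponential estimates, then transfers this to the cost via $|J-J_r|\le L_\Phi\delta_{N_T}+L_L\sum_{k=0}^{N_T-1}\delta_k$. Your extra check that the surrogate states stay in a bounded set is a point the paper leaves implicit. It uses that $\Pi_r$ is a composition of orthogonal projections and $F_{\Delta t}$ is unitary, and it is a legitimate justification for applying Assumption~\ref{ass:cost_regularity} to the subnormalized surrogate trajectory.
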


\begin{proof}
Let
$
\delta_k=\|\Psi_k-\Psi_k^{(r)}\|.
$
From the trajectory-error recursion in the proof of
Theorem~\ref{thrm:traj-approx},
$
\delta_{k+1}\leq\delta_k+\|e_k^{(r)}\|.
$
Hence
$
\delta_k
\leq
\delta_0+\sum_{j=0}^{k-1}\|e_j^{(r)}\|,
$
$
k=1,\ldots,N_T.
$
Assumption~\ref{ass:cost_regularity} gives
$
|J(x_0,\mathbf u)-J_r(x_0,\mathbf u)|
\leq
L_\Phi\delta_{N_T}
+
L_L\sum_{k=0}^{N_T-1}\delta_k.
$
Substituting the preceding trajectory-error estimate proves the claim.
\end{proof}

Proposition~\ref{APosteriori} provides an a posteriori certificate based
on the truncation perturbations realized during a single HT propagation.
Unlike the a priori bounds of Theorems~\ref{thrm:traj-approx} and
\ref{thm:uniformcostapprox}, it can be evaluated directly from
$\|e_j^{(r)}\|$ and provides a computable bound on the full-order cost
discrepancy for the applied control. Importantly, this a posteriori
estimate does not require Assumption~\ref{ass:compressibility}; hence,
even when uniform rank-dependent compressibility cannot be established,
the realized same-control cost discrepancy can still be certified from
the computed truncation perturbations.


We now quantify the discrepancy between the optimal costs of the
full-order and HT-surrogate problems. The key observation is that
Theorem~\ref{thm:uniformcostapprox} holds uniformly over all admissible
control sequences.

\begin{theorem}[Value-Function Approximation]
\label{thm:valuefuncapprox}
Under Assumptions~\ref{ass:cost_regularity} and
\ref{ass:compressibility}, and assuming that $x_0$ satisfies
Assumption~\ref{ass:decay},
$
|V(x_0)-V_r(x_0)|
\leq C_Ve^{-\bar c r},
$
where $C_V=C_J$ and $\bar c=\min\{c',c_{\rm tr}\}$.
\end{theorem}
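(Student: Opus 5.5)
The argument deduces the value-function bound directly from the uniform cost bound of Theorem~\ref{thm:uniformcostapprox}. It uses only the elementary fact that the infimum is $1$-Lipschitz with respect to the sup-norm over the control set. The key point is that the constant $C_Je^{-\bar c r}$ in Theorem~\ref{thm:uniformcostapprox} does not depend on $\mathbf u$. Hence, writing $\varepsilon_r:=C_Je^{-\bar c r}$, we have for every $\mathbf u\in\mathcal U^{N_T}$ the two-sided estimate
\[
J_r(x_0,\mathbf u)-\varepsilon_r\leq J(x_0,\mathbf u)\leq J_r(x_0,\mathbf u)+\varepsilon_r .
\]
We will not assume that minimizers $\mathbf u^\star$ or $\mathbf u_r^\star$ exist, and will work only with infima.

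First, take the infimum over $\mathbf u$ in the right-hand inequality. Since $V(x_0)\leq J(x_0,\mathbf u)\leq J_r(x_0,\mathbf u)+\varepsilon_r$ for all $\mathbf u$, taking the infimum of the right side gives $V(x_0)\leq V_r(x_0)+\varepsilon_r$. Symmetrically, $V_r(x_0)\leq J_r(x_0,\mathbf u)\leq J(x_0,\mathbf u)+\varepsilon_r$ for all $\mathbf u$ yields $V_r(x_0)\leq V(x_0)+\varepsilon_r$. Combining the two inequalities gives $|V(x_0)-V_r(x_0)|\leq \varepsilon_r$, so one may take $C_V=C_J$ with the same rate $\bar c=\min\{c',c_{\rm tr}\}$.

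The only point requiring care is finiteness of $V$ and $V_r$, so that the subtraction is meaningful. Since the flow is unitary, $\Psi_k$ stays on the sphere of radius $\|x_0\|$. The surrogate states stay within distance $C_T$ of it by Theorem~\ref{thrm:traj-approx}, and $\|\Pi_r(x)\|\leq\|x\|$ because truncation is an orthogonal projection. Thus all trajectories lie in a bounded set. Assumption~\ref{ass:cost_regularity} then bounds $|L|$ and $|\Phi|$ along trajectories relative to their values at a fixed reference point, provided $L(\cdot,u)$ is bounded at that point uniformly in $u$, as is the case for the typical costs considered. Even without this, if one of $V,V_r$ equals $-\infty$, the inequalities above force the other to equal $-\infty$ as well. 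I therefore expect no real obstacle: the substantive work is already contained in the uniformity over controls established in Theorem~\ref{thm:uniformcostapprox}.
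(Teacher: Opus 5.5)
Your proof is correct and is essentially the paper's argument: the paper also writes the two-sided estimate $J_r(x_0,\mathbf u)-C_Je^{-\bar c r}\leq J(x_0,\mathbf u)\leq J_r(x_0,\mathbf u)+C_Je^{-\bar c r}$ from Theorem~\ref{thm:uniformcostapprox} and takes infima over $\mathcal U^{N_T}$, without needing minimizers to exist. Your finiteness discussion is extra care that the paper omits; one small correction there is that HSVD truncation is a product of orthogonal projections rather than a single orthogonal projection, which still gives $\|\Pi_r(x)\|\leq\|x\|$.
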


\begin{proof}
By Theorem~\ref{thm:uniformcostapprox}, for every
$\mathbf u\in\mathcal U^{N_T}$,
$
J_r(x_0,\mathbf u)-C_Je^{-\bar c r}
\leq J(x_0,\mathbf u)
\leq J_r(x_0,\mathbf u)+C_Je^{-\bar c r}.
$
Taking the infimum over $\mathbf u\in\mathcal U^{N_T}$ yields
$
V_r(x_0)-C_Je^{-\bar c r}
\leq V(x_0)
\leq V_r(x_0)+C_Je^{-\bar c r},
$
which proves the claim with $C_V=C_J$.
\end{proof}




\begin{theorem}[Certified Near-Optimality]
\label{thm:nearopt}
Under Assumptions~\ref{ass:cost_regularity} and
\ref{ass:compressibility}, and assuming that $x_0$ satisfies
Assumption~\ref{ass:decay}, suppose that minimizers
$\mathbf u^\star$ and $\mathbf u_r^\star$ of the full-order and
HT-surrogate problems exist. Then
$
J(x_0,\mathbf u_r^\star)-V(x_0)
\leq C_Ne^{-\bar c r},
$
where $C_N=2C_J$ and $\bar c=\min\{c',c_{\rm tr}\}$.
\end{theorem}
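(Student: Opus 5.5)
The plan is the standard three-term telescoping argument, using the uniform cost bound of Theorem~\ref{thm:uniformcostapprox} twice together with the optimality of $\mathbf u_r^\star$ for the surrogate problem. Write
\[
J(x_0,\mathbf u_r^\star)-V(x_0)
=\bigl[J(x_0,\mathbf u_r^\star)-J_r(x_0,\mathbf u_r^\star)\bigr]
+\bigl[J_r(x_0,\mathbf u_r^\star)-J_r(x_0,\mathbf u^\star)\bigr]
+\bigl[J_r(x_0,\mathbf u^\star)-J(x_0,\mathbf u^\star)\bigr],
\]
which is valid because $V(x_0)=J(x_0,\mathbf u^\star)$ by existence of the full-order minimizer.

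First, Theorem~\ref{thm:uniformcostapprox} bounds the first bracket by $C_Je^{-\bar c r}$. This is legitimate because that theorem holds uniformly over all of $\mathcal U^{N_T}$. In particular it applies to the control $\mathbf u_r^\star$, even though $\mathbf u_r^\star$ depends on $r$. This is the step I regard as the only genuine subtlety: the constants $C_{\rm tr},c_{\rm tr}$ in Assumption~\ref{ass:compressibility} are independent of the control, so no circularity arises.

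Second, the middle bracket is nonpositive. The reason is that $\mathbf u_r^\star$ minimizes $J_r(x_0,\cdot)$ over $\mathcal U^{N_T}$ and $\mathbf u^\star$ is admissible.

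Third, Theorem~\ref{thm:uniformcostapprox} applied to $\mathbf u^\star$ bounds the last bracket by $C_Je^{-\bar c r}$.

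Summing the three estimates gives $J(x_0,\mathbf u_r^\star)-V(x_0)\le 2C_Je^{-\bar c r}=C_Ne^{-\bar c r}$. Nonnegativity of the gap is immediate from the definition of $V$.

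As an alternative route, one could instead combine the first bracket with Theorem~\ref{thm:valuefuncapprox}, writing $J_r(x_0,\mathbf u_r^\star)=V_r(x_0)\le V(x_0)+C_Je^{-\bar c r}$. This yields the same constant and shows that the full-order minimizer $\mathbf u^\star$ is not actually needed; only its use in the telescoping decomposition above requires it.
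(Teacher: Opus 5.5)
Your proposal is correct and follows essentially the paper's own proof: you apply Theorem~\ref{thm:uniformcostapprox} to both $\mathbf u_r^\star$ and $\mathbf u^\star$, and you use $J_r(x_0,\mathbf u_r^\star)\le J_r(x_0,\mathbf u^\star)$ and $J(x_0,\mathbf u^\star)=V(x_0)$ to obtain $C_N=2C_J$. Your side remark also holds: going through Theorem~\ref{thm:valuefuncapprox} instead removes the need for a full-order minimizer.
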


\begin{proof}
By Theorem~\ref{thm:uniformcostapprox},
$
J(x_0,\mathbf u_r^\star)
\leq
J_r(x_0,\mathbf u_r^\star)+C_Je^{-\bar c r}.
$
Since $\mathbf u_r^\star$ minimizes $J_r$,
$
J_r(x_0,\mathbf u_r^\star)
\leq
J_r(x_0,\mathbf u^\star)
\leq
J(x_0,\mathbf u^\star)+C_Je^{-\bar c r},
$
where the second inequality again follows from
Theorem~\ref{thm:uniformcostapprox}. Since
$J(x_0,\mathbf u^\star)=V(x_0)$,
$
J(x_0,\mathbf u_r^\star)-V(x_0)
\leq
2C_Je^{-\bar c r},
$
which proves the claim with $C_N=2C_J$.
\end{proof}

Theorem~\ref{thm:nearopt} shows that controls optimized on fixed-rank
HT surrogates are exponentially near-optimal when evaluated on the
full-order Schr\"odinger dynamics. Thus, increasing the prescribed HT
rank yields a certified decay of the full-order optimality gap.
Realizing computational savings, however, additionally requires a
fully compressed implementation of the underlying tensor-network
operations.

\begin{corollary}[Control--Solution Convergence]
\label{cor:control_convergence}
Under the hypotheses of Theorem~\ref{thm:nearopt}, suppose that
$J(x_0,\cdot)$ is continuous on $\mathcal U^{N_T}$ and that
$\mathbf u^\star$ is the unique minimizer of the full-order problem.
Suppose further that there exist $\mu,\delta>0$ such that
$
J(x_0,\mathbf u)-J(x_0,\mathbf u^\star)
\geq
\frac{\mu}{2}\|\mathbf u-\mathbf u^\star\|^2,
$
for every $\mathbf u\in\mathcal U^{N_T}$ satisfying
$\|\mathbf u-\mathbf u^\star\|\leq\delta$.
Then, for all sufficiently large $r$,
$
\|\mathbf u_r^\star-\mathbf u^\star\|
\leq
\sqrt{2C_N/\mu}\,e^{-\bar c r/2}.
$
\end{corollary}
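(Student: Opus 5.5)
The argument has two stages. First we show that $\mathbf u_r^\star$ eventually lies in the $\delta$-ball around $\mathbf u^\star$. Then we use the quadratic growth inequality together with Theorem~\ref{thm:nearopt}.

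\textbf{Localization.} This step is the main obstacle. Set $K_\delta=\{\mathbf u\in\mathcal U^{N_T}:\|\mathbf u-\mathbf u^\star\|\geq\delta\}$. Since $\mathcal U^{N_T}$ is a compact box in $\mathbb R^{N_T}$, $K_\delta$ is compact. If $K_\delta$ is empty, the quadratic growth bound holds on all of $\mathcal U^{N_T}$ and there is nothing to localize. Otherwise, $J(x_0,\cdot)$ is continuous, so it attains its minimum on $K_\delta$ at some $\hat{\mathbf u}\neq\mathbf u^\star$. Uniqueness of the minimizer then gives
\[
\eta:=\min_{\mathbf u\in K_\delta}J(x_0,\mathbf u)-J(x_0,\mathbf u^\star)>0 .
\]
Choose $r$ large enough that $C_Ne^{-\bar c r}<\eta$. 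Theorem~\ref{thm:nearopt} gives $J(x_0,\mathbf u_r^\star)-J(x_0,\mathbf u^\star)\leq C_Ne^{-\bar c r}<\eta$, so $\mathbf u_r^\star\notin K_\delta$. Hence $\|\mathbf u_r^\star-\mathbf u^\star\|<\delta$. The threshold on $r$ is explicit: $r>\bar c^{-1}\log(C_N/\eta)$. This makes precise the phrase ``for all sufficiently large $r$''.

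\textbf{Rate.} For such $r$, the quadratic growth hypothesis applies at $\mathbf u=\mathbf u_r^\star$. Combining it with Theorem~\ref{thm:nearopt} and $J(x_0,\mathbf u^\star)=V(x_0)$ gives
\[
\frac{\mu}{2}\|\mathbf u_r^\star-\mathbf u^\star\|^2\leq J(x_0,\mathbf u_r^\star)-V(x_0)\leq C_Ne^{-\bar c r}.
\]
Rearranging and taking square roots yields $\|\mathbf u_r^\star-\mathbf u^\star\|\leq\sqrt{2C_N/\mu}\,e^{-\bar c r/2}$.

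The only delicate point is the compactness-plus-uniqueness argument that produces the positive gap $\eta$. Without it, quadratic growth holds only locally and cannot be applied to $\mathbf u_r^\star$. Everything after that is a direct substitution.
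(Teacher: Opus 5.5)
Your proof is correct and takes the same approach as the paper. Compactness of $\mathcal U^{N_T}$, continuity of $J(x_0,\cdot)$, and uniqueness of $\mathbf u^\star$ give a positive gap on $\{\mathbf u:\|\mathbf u-\mathbf u^\star\|\geq\delta\}$; Theorem~\ref{thm:nearopt} then places $\mathbf u_r^\star$ in the $\delta$-ball for large $r$, and quadratic growth yields the rate, with your explicit threshold $r>\bar c^{-1}\log(C_N/\eta)$ a small added precision over the paper's ``sufficiently large $r$''.
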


\begin{proof}
Let
$\mathcal S_\delta:=\{\mathbf u\in\mathcal U^{N_T}:
\|\mathbf u-\mathbf u^\star\|\geq\delta\}$.
If $\mathcal S_\delta=\emptyset$, then
$\|\mathbf u_r^\star-\mathbf u^\star\|<\delta$ for every $r$.
If $\mathcal S_\delta\neq\emptyset$, define
$\gamma_\delta:=\min_{\mathbf u\in\mathcal S_\delta}
\big(J(x_0,\mathbf u)-J(x_0,\mathbf u^\star)\big)$.
Compactness of $\mathcal U^{N_T}$, continuity of $J(x_0,\cdot)$, and
uniqueness of $\mathbf u^\star$ imply $\gamma_\delta>0$.
By Theorem~\ref{thm:nearopt},
$J(x_0,\mathbf u_r^\star)-J(x_0,\mathbf u^\star)
\leq C_Ne^{-\bar c r}$.
Hence, for all sufficiently large $r$,
$C_Ne^{-\bar c r}<\gamma_\delta$, so
$\mathbf u_r^\star\notin\mathcal S_\delta$ and therefore
$\|\mathbf u_r^\star-\mathbf u^\star\|<\delta$.
Thus, in either case, for all sufficiently large $r$ the local
quadratic-growth condition applies, yielding
$\frac{\mu}{2}\|\mathbf u_r^\star-\mathbf u^\star\|^2
\leq J(x_0,\mathbf u_r^\star)-J(x_0,\mathbf u^\star)
\leq C_Ne^{-\bar c r}$.
Consequently,
$\|\mathbf u_r^\star-\mathbf u^\star\|
\leq\sqrt{2C_N/\mu}\,e^{-\bar c r/2}$.
\end{proof}

The local quadratic-growth condition is weaker than global strong
convexity and is therefore suitable for generally nonconvex
quantum-control objectives. Under global $\mu$-strong convexity, it
holds globally and the sufficiently-large-rank qualification is
unnecessary.

\begin{remark}[\!Approximate Surrogate Optimization] \!\!\!\!\!
Suppose\\ 
a numerical optimizer returns
$\widehat{\mathbf u}_r$ satisfying
$
J_r(x_0,\widehat{\mathbf u}_r)
\leq V_r(x_0)+\eta,
$
where $\eta\geq0$ is the surrogate optimization error.
Then
$
J(x_0,\widehat{\mathbf u}_r)-V(x_0)
\leq
2C_Je^{-\bar c r}+\eta.
$
Hence, for a prescribed tolerance $\varepsilon>\eta$, it suffices that
$
r\geq
\left\lceil
\frac{1}{\bar c}
\log\!\left(\frac{2C_J}{\varepsilon-\eta}\right)
\right\rceil
$
to guarantee
$
J(x_0,\widehat{\mathbf u}_r)-V(x_0)\leq\varepsilon.
$
Thus, the full-order optimality gap decomposes into an HT approximation
contribution and a surrogate-optimization contribution.
\end{remark}

\subsection{Rank--Performance Scaling}

We now derive an explicit relation between the prescribed HT rank and
a desired near-optimality tolerance.

\begin{corollary}[Rank--Performance Scaling]
\label{cor:rank_performance}
Suppose the hypotheses of Theorem~\ref{thm:nearopt} hold, so that
$
J(x_0,\mathbf u_r^\star)-V(x_0)
\leq C_Ne^{-\bar c r},
$
where $C_N>0$ and $\bar c>0$ are independent of $r$.
For a desired tolerance $\varepsilon>0$,
$
J(x_0,\mathbf u_r^\star)-V(x_0)\leq\varepsilon
$
is guaranteed whenever
$
r\geq
\left\lceil
\frac{1}{\bar c}
\log\!\left(\frac{C_N}{\varepsilon}\right)
\right\rceil.
$
Consequently, a rank sufficient to guarantee tolerance $\varepsilon$
scales as
$
r=O\!\left(\log(1/\varepsilon)\right).
$
\end{corollary}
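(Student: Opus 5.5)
The plan is to invert the exponential bound of Theorem~\ref{thm:nearopt}, using only that $r\mapsto C_Ne^{-\bar c r}$ is strictly decreasing in $r$ because $\bar c>0$. No new analytic estimate is needed; the whole argument consists of solving $C_Ne^{-\bar c r}\leq\varepsilon$ for $r$ and then reading off the asymptotic order.

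\textbf{Step 1.} Fix $\varepsilon>0$ and set
$r_\varepsilon:=\frac{1}{\bar c}\log\!\left(\frac{C_N}{\varepsilon}\right)$.
Since $\log$ and $\exp$ are monotone, $C_Ne^{-\bar c r}\leq\varepsilon$ is equivalent to $-\bar c r\leq\log(\varepsilon/C_N)$, that is, to $r\geq r_\varepsilon$.

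\textbf{Step 2.} Any integer $r\geq\lceil r_\varepsilon\rceil$ also satisfies $r\geq r_\varepsilon$. Hence
$J(x_0,\mathbf u_r^\star)-V(x_0)\leq C_Ne^{-\bar c r}\leq\varepsilon$
by the hypothesis imported from Theorem~\ref{thm:nearopt}.

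\textbf{Step 3 (edge case).} If $\varepsilon\geq C_N$, then $r_\varepsilon\leq0$ and the ceiling is nonpositive. The rank is required to satisfy $r\geq1$ (Assumption~\ref{ass:compressibility} is only stated for $r\geq1$), so the condition should be read as $r\geq\max\{1,\lceil r_\varepsilon\rceil\}$. For every $r\geq1$ the bound $C_Ne^{-\bar c r}<C_N\leq\varepsilon$ already holds, so the conclusion is unaffected.

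\textbf{Step 4.} For the scaling statement, note that $\lceil r_\varepsilon\rceil\leq r_\varepsilon+1=\frac{1}{\bar c}\log(1/\varepsilon)+\frac{1}{\bar c}\log C_N+1$. Here $C_N$ and $\bar c$ are independent of $r$ and $\varepsilon$, so for $\varepsilon\to0$ the sufficient rank is $O(\log(1/\varepsilon))$.

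There is no genuine obstacle in this argument. The only care points are the treatment of the ceiling and of the nonpositive case $\varepsilon\geq C_N$ in Step~3, and making clear that the $O(\cdot)$ statement refers to the sufficient rank, which is an upper bound and not the minimal rank.
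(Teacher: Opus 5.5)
Your proposal is correct and follows the paper's argument: it inverts $C_Ne^{-\bar c r}\leq\varepsilon$ using monotonicity of the exponential and then reads off the logarithmic scaling. Your treatment of the ceiling, of the case $\varepsilon\geq C_N$ (where every admissible $r\geq1$ already suffices), and the explicit estimate $\lceil r_\varepsilon\rceil\leq r_\varepsilon+1$ spell out details that the paper's one-line proof leaves implicit.
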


\begin{proof}
By Theorem~\ref{thm:nearopt}, it suffices to require
$
C_Ne^{-\bar c r}\leq\varepsilon.
$
Solving for $r$ gives the stated bound, and the logarithmic scaling
follows immediately.
\end{proof}

Corollary~\ref{cor:rank_performance} establishes a theoretical
rank--performance relation under the uniform rank-dependent HT
truncation-accuracy condition. The rank sufficient to guarantee a
prescribed performance tolerance grows only logarithmically with the
inverse tolerance. More rapidly decaying Schmidt spectra across the
tree-induced bipartitions can yield smaller HT truncation errors and
therefore potentially lower sufficient ranks, whereas slowly decaying
spectra may require larger ranks. 
The constants appearing in the sufficient rank bound depend on the
truncation-accuracy and cost-regularity estimates and may be
conservative in practice, since they arise from uniform worst-case
truncation bounds, Lipschitz estimates, and accumulation over the
finite horizon. Moreover, these constants may not be readily available
a priori. Consequently, Corollary~\ref{cor:rank_performance} primarily
establishes the logarithmic rank--performance scaling implied by the
theory rather than providing a directly computable rank-selection rule.
In practice, the realized truncation perturbations can instead be
utilized in the a posteriori estimate of
Proposition~\ref{APosteriori}, yielding a trajectory-specific
performance assessment without replacing each perturbation by its
uniform worst-case bound.


\begin{remark}
The logarithmic rank--performance relation is the optimality counterpart
of the rank--accuracy relation established in~\cite{dehaghani2026certified}.
Under the hypotheses of Theorem~\ref{thm:nearopt}, the full-order
optimality gap of surrogate-optimal controls decreases exponentially
with the prescribed HT rank.
\end{remark}

\begin{remark}[Complexity--Performance Trade-Off]
The sufficient rank bound in Corollary~\ref{cor:rank_performance} also
induces a trade-off between surrogate complexity and control
performance. For a balanced binary dimension tree with bounded local
dimensions and uniform HT rank $r$, a standard HT parametrization
requires $\mathcal{O}(nr^3)$ coefficients in the generic uniform-rank
case, due to the third-order transfer tensors, with lower-order
contributions from the leaf frames. Combining this representation
scaling with
$r=\mathcal{O}(\log(1/\varepsilon))$
from Corollary~\ref{cor:rank_performance} yields
$
\mathcal{O}\!\left(n\log^3(1/\varepsilon)\right)
$
for the corresponding HT representation size.
The computational cost of HT propagation additionally depends on the
tensor-network representation of the Hamiltonian and the chosen
structure-preserving time-stepping algorithm. Therefore, no universal
wall-clock or propagation-cost scaling is asserted here. The numerical
experiments below instead quantify the rank-dependent
representation--performance trade-off and compare the HT representation
size with that of the full-order state.
\end{remark}

\section{Numerical Validation}

We investigate the HT-surrogate optimal-control framework on a controlled
XXZ spin chain, focusing on spectral compressibility, truncation and
trajectory accuracy, and control performance and certification.

\paragraph*{Controlled XXZ Spin-Chain Benchmark}
We consider an $n=8$ spin-$1/2$ XXZ chain with
$H_0=J\sum_{j=1}^{n-1}(X_jX_{j+1}+Y_jY_{j+1}+\Delta Z_jZ_{j+1})$
and global transverse control
$H_1=\frac{1}{n}\sum_{j=1}^{n}X_j$, so that
$H(u_k)=H_0+u_kH_1$, $|u_k|\leq u_{\max}$.
We use $J=1$, $\Delta=0.5$, $u_{\max}=4$, $\rho=10^{-3}$,
$T=3$, $N_T=15$, $\Delta t=0.2$, and $\hbar=1$, with N\'eel
initial state $x_0=\ket{10101010}$ and a balanced binary dimension tree
following the spatial ordering of the spins.
The propagator is unitary, $L_L=0$, and the fidelity terminal cost
satisfies Assumption~\ref{ass:cost_regularity}. Moreover, $x_0$ has
Schmidt rank one and satisfies Assumption~\ref{ass:decay}.
Assumption~\ref{ass:compressibility} is 
assessed numerically below; these finite tests do not establish
uniform validity over the admissible control space.
The target state is generated by the same dynamics using
$u_{\mathrm{ref}}(t_k)=2.2\sin(2\pi t_k/T)
+1.1\sin(4\pi t_k/T+0.4)$, clipped to
$[-u_{\max},u_{\max}]$. The optimization minimizes
$J(x_0,\mathbf u)=
1-|\langle\Psi_{\mathrm{tar}},\Psi_{N_T}(x_0,\mathbf u)\rangle|^2
+\rho\Delta t\sum_{k=0}^{N_T-1}u_k^2$
subject to $|u_k|\leq u_{\max}$ using SLSQP.
Each surrogate and full-order problem is solved from ten initializations.
The best surrogate control is denoted by $\widehat{\mathbf u}_r$, with
$\widehat V_r=J_r(x_0,\widehat{\mathbf u}_r)$, and is reevaluated under
the full-order dynamics; $V_{\mathrm{full}}$ denotes the best full-order
objective obtained. Since global optimality is not established, we use
$V_{\mathrm{ref}}=
\min\{V_{\mathrm{full}},
J(x_0,\widehat{\mathbf u}_r):r\in\mathcal R\}$
as an empirical reference, distinct from the theoretical optimum
$V(x_0)$.

\paragraph*{Hierarchical Spectral Compressibility}
Figure~\ref{fig:spectral_decay}(a) shows hierarchical singular-value
spectra along the best-found full-order trajectory at $k=5,10,15$ for
the balanced $4\vert4$ bipartition. Figure~\ref{fig:spectral_decay}(b)
reports $\max_{k,t}\sigma_{\alpha}^{(t)}(\Psi_k)$ over sampled time
indices and non-root internal nodes. An exponential fit
$Ce^{-c_{\mathrm{fit}}\alpha}$ gives $c_{\mathrm{fit}}=0.153$ and
$R^2=0.950$, indicating approximately exponential decay over the
sampled spectral range. This does not establish
Assumption~\ref{ass:compressibility}, which requires a uniform
rank-dependent truncation bound for pre-truncation surrogate states
over admissible controls and time indices.

\begin{figure}
    \centering
    \includegraphics[width=0.98\linewidth]{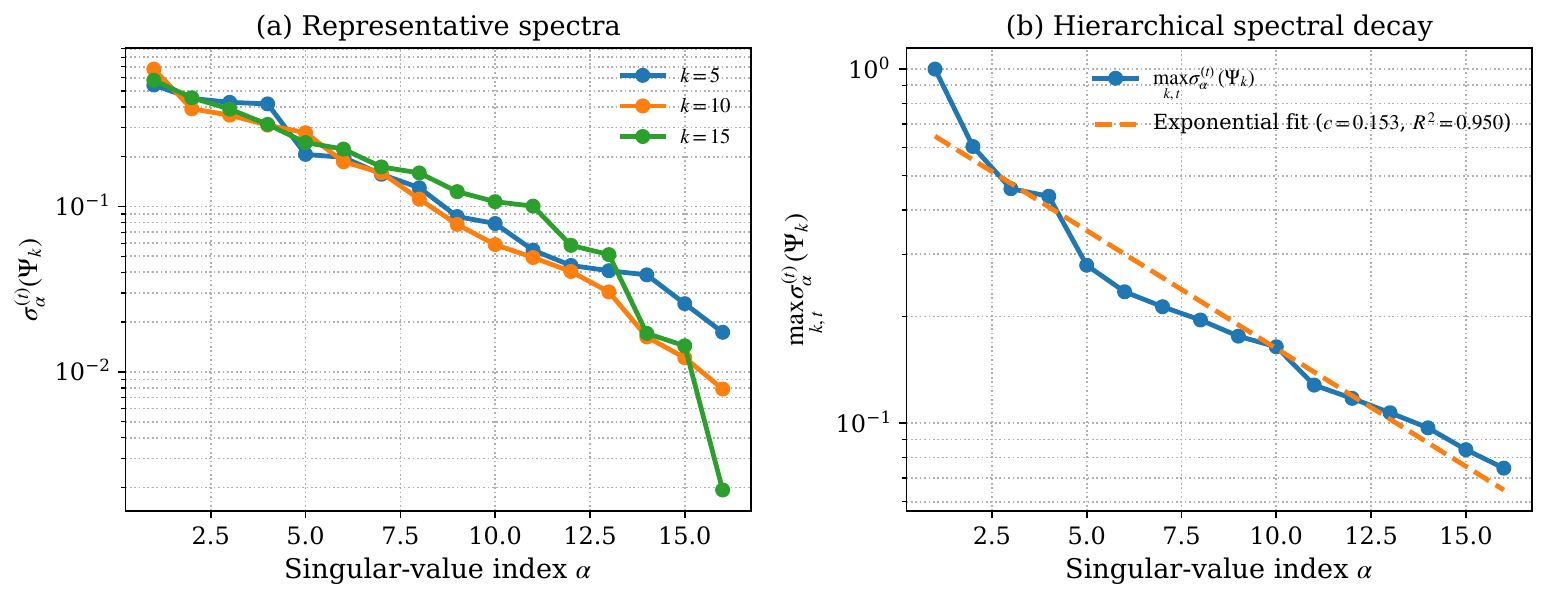}
    \caption{\small
    Hierarchical spectral compressibility along the best-found full-order
    trajectory. (a) Balanced $4\vert4$ spectra at $k=5,10,15$.
    (b) $\max_{k,t}\sigma_{\alpha}^{(t)}(\Psi_k)$ over sampled time
    indices and non-root internal nodes; dashed line:
    $Ce^{-c_{\mathrm{fit}}\alpha}$, with $c_{\mathrm{fit}}=0.153$
    and $R^2=0.950$.}
    \label{fig:spectral_decay}
\end{figure}

\paragraph*{Numerical Stress Test of the HT Truncation Condition}
We assess Assumption~\ref{ass:compressibility} by computing
$E_{\mathrm{tr}}^{\mathrm{samp}}(r)
=\max_{\mathbf u\in\mathcal U_{\mathrm{samp}}}
\max_{0\leq k<N_T}\|e_k^{(r)}(\mathbf u)\|$
over diverse admissible controls, including random, smooth Fourier,
bang--bang, low-amplitude, piecewise-constant, and benchmark-specific
sequences. Independent calibration and validation ensembles of 100
controls each are used.
An exponential envelope fitted to the calibration worst cases yields
$C_{\mathrm{tr}}^{\mathrm{emp}}
e^{-c_{\mathrm{tr}}^{\mathrm{emp}}r}$ with
$c_{\mathrm{tr}}^{\mathrm{emp}}=0.111$.
As shown in Fig.~\ref{fig:assumption3_stress}, validation worst cases
remain below this frozen envelope for all $r=1,\ldots,15$.
This empirical rate describes the tested truncation perturbations and
is distinct from the theoretical
$\bar c=\min\{c',c_{\rm tr}\}$.
Approximate adversarial searches over
$\mathbf u\in[-u_{\max},u_{\max}]^{N_T}$ at
$r\in\{2,4,6,8,10,12,14\}$ also remain below the envelope, with a
maximum adversarial-to-envelope ratio of $0.944$.
These finite tests provide evidence consistent with
Assumption~\ref{ass:compressibility} for this benchmark, but do not
establish uniform validity over the admissible control space.

\begin{figure}
    \centering
    \includegraphics[width=0.5\linewidth]{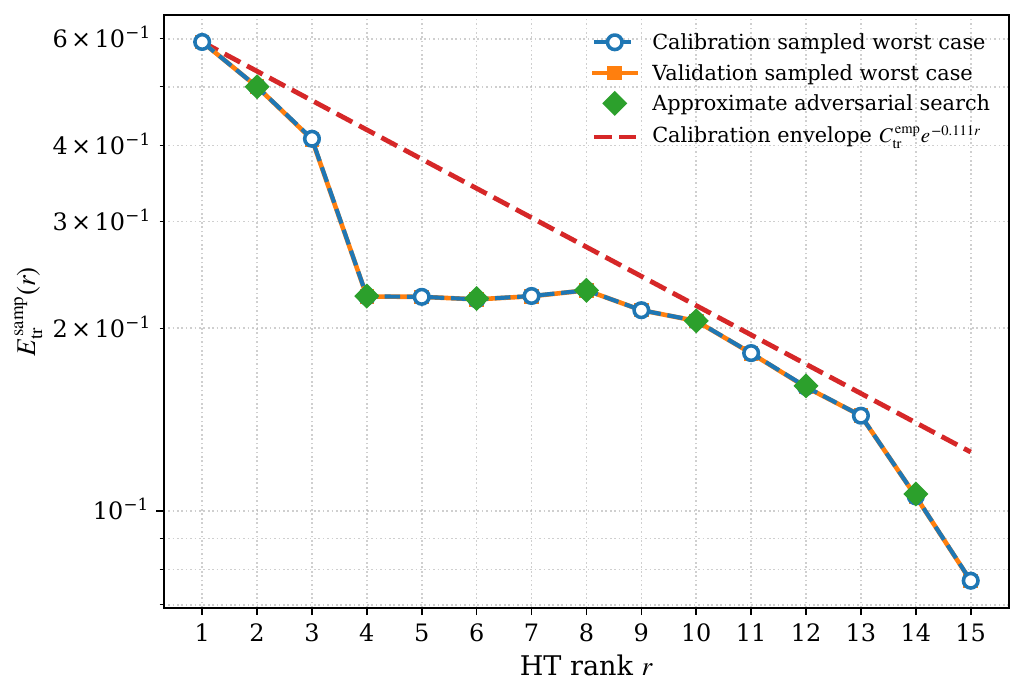}
    \caption{\small
    Stress test of the HT truncation condition. Worst-case perturbations
    for independent calibration and validation ensembles; dashed:
    frozen empirical envelope with
    $c_{\mathrm{tr}}^{\mathrm{emp}}=0.111$; diamonds:
    approximate adversarial searches.}
    \label{fig:assumption3_stress}
\end{figure}

\paragraph*{Rank-Dependent HT Approximation Accuracy}
For uniform HT ranks $r=1,\ldots,16$, each best-found surrogate control
$\widehat{\mathbf u}_r$ is propagated under both the full-order and HT
dynamics. We compute
$E_{\mathrm{traj}}(r)=\max_{0\leq k\leq N_T}
\|\Psi_k(x_0,\widehat{\mathbf u}_r)
-\Psi_k^{(r)}(x_0,\widehat{\mathbf u}_r)\|$
and
$E_{\mathrm{tr}}(r)=\max_{0\leq k<N_T}\|e_k^{(r)}\|$.
Figure~\ref{fig:ht_errors} shows an overall rank-dependent decrease in
both quantities, although monotonicity is not expected.
The full-rank case $r=16$ is omitted because both errors are at numerical
precision and is used only as a consistency check.

\begin{figure}
    \centering
    \includegraphics[width=0.98\linewidth]{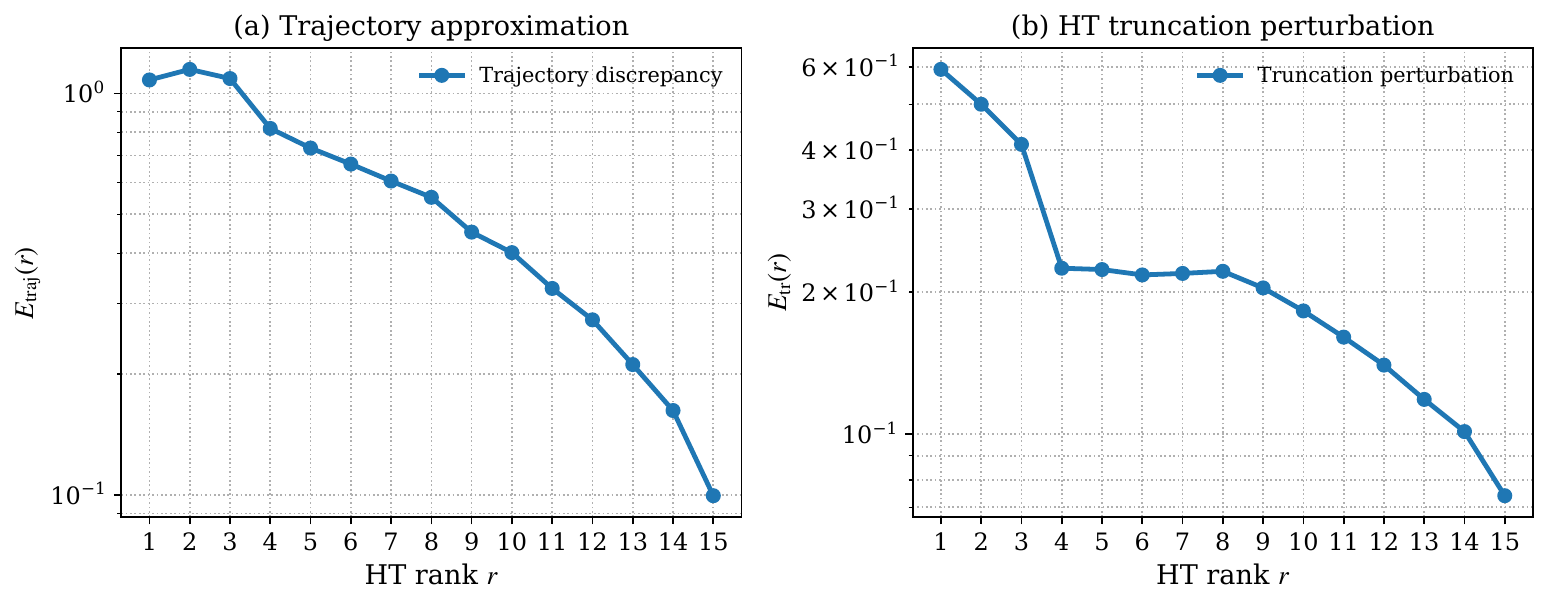}
    \caption{\small
    Rank-dependent HT accuracy for $r=1,\ldots,15$:
    (a) maximum trajectory discrepancy $E_{\mathrm{traj}}(r)$;
    (b) maximum truncation perturbation $E_{\mathrm{tr}}(r)$.}
    \label{fig:ht_errors}
\end{figure}
\paragraph*{Optimal-Control Performance and A Posteriori Certification}

Figure~\ref{fig:optimality_certification}(a) reports the surrogate-value
discrepancy $|\widehat V_r-V_{\mathrm{ref}}|$ and full-order objective
gap $J(x_0,\widehat{\mathbf u}_r)-V_{\mathrm{ref}}$, both showing an
overall reduction with increasing rank. The objective gap need not be
monotone because the surrogate problems are nonconvex and
$\widehat{\mathbf u}_r$ are best-found numerical solutions. Moreover,
these gaps are relative to the empirical $V_{\mathrm{ref}}$, not the
unknown optimum $V(x_0)$.
Figure~\ref{fig:optimality_certification}(b) compares the realized
same-control discrepancy
$E_J(r)=|J(x_0,\widehat{\mathbf u}_r)
-J_r(x_0,\widehat{\mathbf u}_r)|$
with the a posteriori certificate of Proposition~\ref{APosteriori}.
Since $L_L=0$, this discrepancy arises solely from the terminal cost.
The certificate bounds $E_J(r)$ for every tested reduced rank and
generally decreases with rank, although it is conservative at low ranks.
The full-rank case $r=16$, for which the discrepancies are at numerical
precision, is omitted.

\begin{figure}[t]
    \centering
    \includegraphics[width=0.98\linewidth]{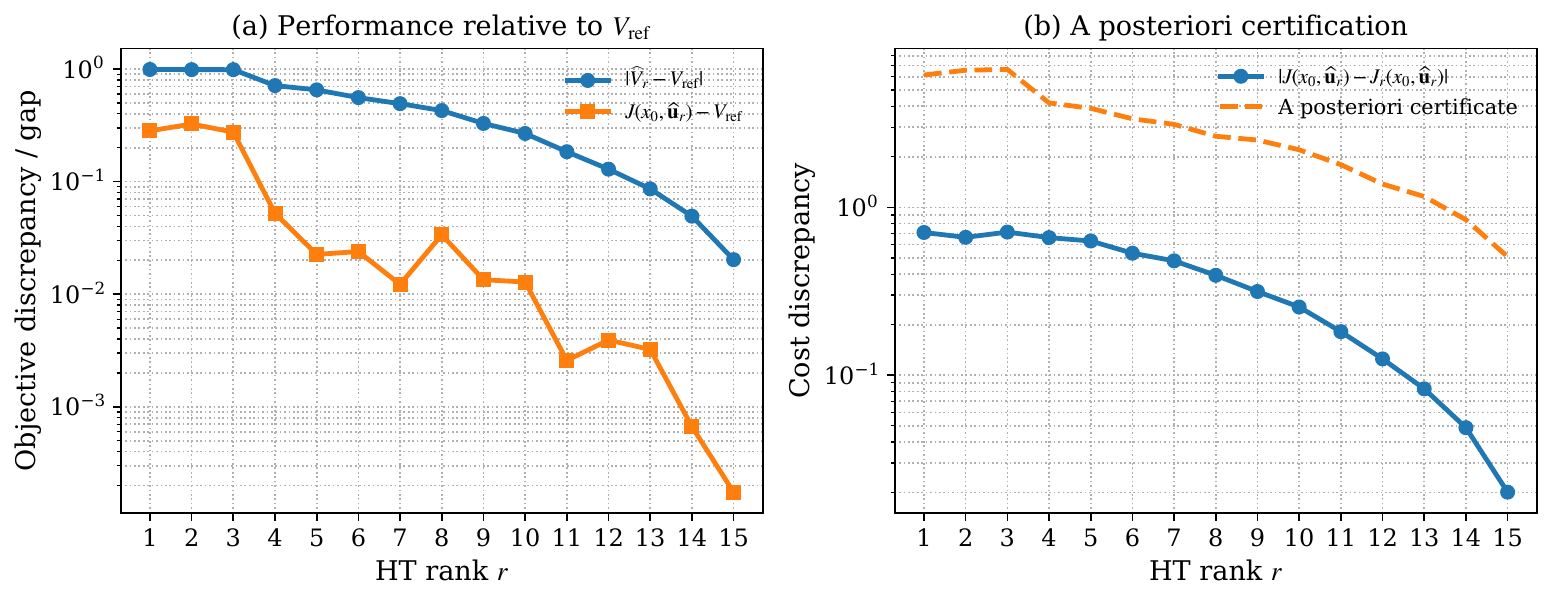}
    \caption{\small
    Control performance and a posteriori certification for $r=1,\ldots,15$.
    (a) $|\widehat V_r-V_{\mathrm{ref}}|$ and
    $J(x_0,\widehat{\mathbf u}_r)-V_{\mathrm{ref}}$.
    (b) Same-control discrepancy $E_J(r)$ and its a posteriori certificate.}
    \label{fig:optimality_certification}
\end{figure}

\paragraph*{Complexity--Performance Trade-Off}
We complement the $n=8$ experiments with an $n=10$ fixed-control study,
propagating three admissible controls under the full-order and HT dynamics
without rank-wise optimization. Figure~\ref{fig:complexity_performance}(a)
compares the canonical HT coefficient count (leaf frames, transfer tensors,
and root core) with the $2^{10}$ full-state coefficients. Accounting for
nodewise rank caps, the HT representation remains smaller through $r=10$
and exceeds the full-state count at $r=12$.
Figure~\ref{fig:complexity_performance}(b) shows the 
same-control discrepancy
$E_J^{(10)}(r;\mathbf u)=|J(x_0,\mathbf u)-J_r(x_0,\mathbf u)|$,
generally decreasing with rank for all three controls. Thus, increasing
rank improves surrogate accuracy while increasing HT representation size.
The full-rank case $r=32$, where the discrepancy is at numerical precision,
is omitted from panel~(b).
These coefficient counts characterize the canonical HT representation,
not measured memory or runtime. Since the present prototype performs
full-space propagation before HT truncation, no computational speedup is
claimed.

\begin{figure}[t]
    \centering
    \includegraphics[width=0.98\linewidth]{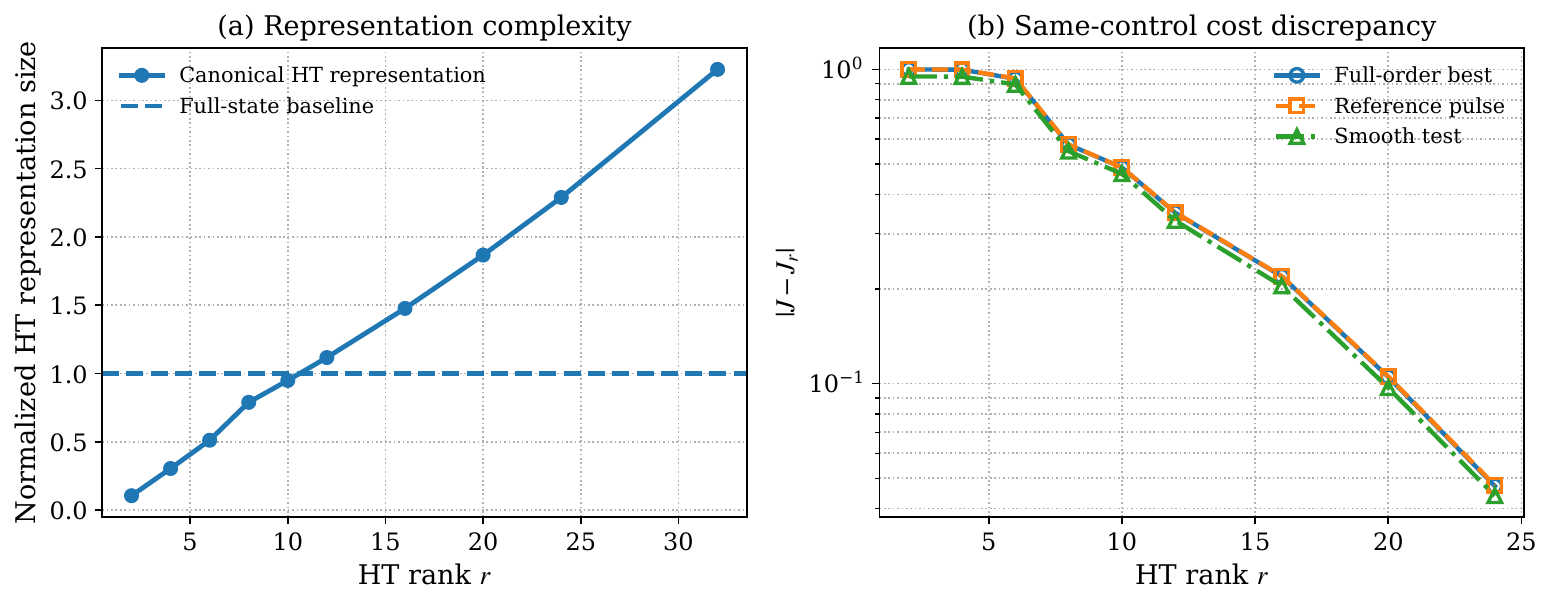}
    \caption{\small
    Complexity--performance trade-off for the $n=10$ fixed-control study.
    (a) HT coefficient count normalized by the $2^{10}$ full-state count.
    (b) Same-control discrepancy $E_J^{(10)}(r;\mathbf u)$ for three
    admissible controls.}
    \label{fig:complexity_performance}
\end{figure}

\section{Conclusion}

We developed a finite-horizon quantum optimal-control framework based on
fixed-rank Hierarchical Tucker surrogates. Under a uniform rank-dependent
HT truncation-accuracy condition, we established rank-dependent trajectory,
cost, and value-function bounds, near-optimality guarantees, and an
a posteriori cost certificate based on realized truncation perturbations.
Numerical experiments on controlled XXZ spin chains provide finite-sample
evidence consistent with the assumed truncation behavior and illustrate
the rank-dependent trade-off between surrogate accuracy, control
performance, and HT representation size. 
Future work will consider fully compressed HT propagation,
adaptive rank selection, and broader many-body and open-system settings.

\bibliographystyle{IEEEtran}

\bibliography{main}

\end{document}